\newtheorem*{maintheorem*}{Main Theorem}
\newtheorem{theorem}{Theorem}[section]
\newtheorem{prop}[theorem]{Proposition}
\theoremstyle{definition}
\newtheorem{defn}[theorem]{Definition}
\newtheorem{example}[theorem]{Example}
\newtheorem{question}[theorem]{Question}
\numberwithin{equation}{section}
\newcommand{\cc}{\mathbb{C}}
\newcommand{\ff}{\mathbb{F}}
\newcommand{\nn}{\mathbb{N}}
\newcommand{\pp}{\mathbb{P}}
\newcommand{\qq}{\mathbb{Q}}
\newcommand{\rr}{\mathbb{R}}
\newcommand{\zz}{\mathbb{Z}}
\providecommand\ldb{\llbracket}
\providecommand\rdb{\rrbracket}
\newcommand{\mcd}{\mathrm{mcd}}
\newcommand{\uu}{\mathcal{U}}
\keywords{commutative ring, polynomial ring, IDF-domain, GL-domain, GCD-domain}
\subjclass[2020]{Primary: 13A05, 13F15; Secondary: 13P05}
\begin{document}
	
\mbox{}
\title{On GL-domains and the ascent of the IDF property}

\author{Victor Gonzalez}
\address{Department of Mathematics\\Miami Dade College\\Miami, FL 33135, USA}
\email{vmichelg@mdc.edu}

\author{Ishan Panpaliya}
\address{Department of Mathematics\\Seattle University\\Seattle, WA 02139, US}
\email{ipanpaliya@seattleu.edu}

\date{\today}

\begin{abstract}

    Following the terminology introduced by Arnold and Sheldon back in 1975, we say that an integral domain $D$ is a GL-domain if the product of any two primitive polynomials over $D$ is again a primitive polynomial. In this paper, we study the class of GL-domains. First, we propose a characterization of GL-domains in terms of certain elements we call prime-like. Then we identify a new class of GL-domains. An integral domain $D$ is also said to have the IDF property provided that each nonzero element of $D$ is divisible by only finitely many non-associate irreducible divisors. It was proved by Malcolmson and Okoh in 2009 that the IDF property ascends to polynomial extensions when restricted to the class of GCD-domains. This result was recently strengthened by Gotti and Zafrullah to the class of PSP-domains. We conclude this paper by proving that the IDF property does not ascend to polynomial extensions in the class of GL-domains, answering an open question posed by Gotti and Zafrullah.
\end{abstract}

\bigskip
\maketitle

\bigskip
%%%%%%%%%%%%
%%%%%%%%%%%%
\section{Introduction}
\label{sec:intro}

Let $D$ be an integral domain with multiplicative group of units denoted by $D^\times$, and let $F$ be the field of fractions of $D$. An ideal $I$ of $D$ is said to be invertible if $II^{-1} = D$, where $I^{-1} := \{q \in F : qI \subseteq D\}$. If every nonempty finite subset of $D$ consisting of nonzero elements has a greatest common divisor (GCD for short) in $D$, then we say that $D$ is a GCD-domain. Given a nonzero polynomial $f \in D[X]$, the ideal of $D[X]$ generated by all the coefficients of $f$ is referred to as the content of $f$ and is denoted by $I_f$. Then we have the following terminology for the polynomial $f$ in terms of its content:
\begin{itemize}
	\item $f$ is a primitive polynomial over $D$ if $I_f$ is not contained in any proper principal ideal of $D$, and
	\smallskip
	
	\item $f$ is a super-primitive polynomial over $D$ if $I_f^{-1} = D$.
\end{itemize}
Observe that $f$ is primitive over $D$ if the only common divisors of its coefficients are the units in $D$. Gauss's lemma states that, in a GCD-domain, the product of two primitive polynomials is again a primitive polynomial. With Gauss's lemma as motivation, the following definition was introduced by J. Arnold and P. Sheldon~\cite{AS75} back in 1975.

\begin{defn} [J. Arnold and P. Sheldon~\cite{AS75}, 1975]
	An integral domain $D$ is called a \emph{GL-domain} if the product of two primitive polynomials in $D$ remains a primitive polynomial. 
\end{defn}
When an integral domain is a GL-domain, we also say it has the GL-property. By virtue of Gauss's lemma, every GCD-domain is a GL-domain. It is natural to wonder whether the class consisting of all GL-domains \emph{strictly} contains the class of all GCD-domains and, that being the case, which other relevant integral domains are GL-domains. These questions were addressed back in the seventies, first by H. Tang~\cite{hT72} and then by J. Arnold and P. Sheldon~\cite{AS75}, who studied the class of GL-domains in connection to the class of PSP-domains.

\begin{defn} [J. Arnold and P. Sheldon~\cite{AS75}, 1975]
	An integral domain $D$ is called a \emph{PSP-domain} if every primitive polynomial over $D$ is super-primitive. 
\end{defn}

The first investigation of PSP-domains was carried out by H. Tang~\cite{hT72} in his study of super-primitive polynomials. Among other results, he proved that every GCD-domain is a PSP-domain and also that in any integral domain the product of a super-primitive polynomial and a super-primitive (resp., primitive) polynomial is super-primitive (resp., primitive), which implies that the class consisting of all PSP-domains is contained in the class of all GL-domains. Several generalizations of GCD-domains, including GL-domains and PSP-domains, have been investigated in the literature of classical commutative ring theory (see \cite{dA00} and references therein).
\smallskip

In this paper, we first provide some insight into the class of all GL-domains, and then we tackle the still open question of whether every polynomial ring of a GL-domain with the irreducible-divisor-finite property (IDF property) preserves the IDF property.
\smallskip

\begin{defn} [A. Grams and H.~Warner~\cite{GW75}, 1975]
    An integral domain $D$ is an \emph{IDF domain} (or has the \emph{IDF property}) if every nonzero element of $D$ is divisible by only finitely many irreducibles up to associates.
\end{defn}

%Similarly, a commutative monoid is said to have the IDF property if every element is divisible by only finitely many irreducibles up to associates. 
The question of whether the IDF property ascends to polynomial extensions was first posed by D. Anderson, D. Anderson, and M. Zafrullah~\cite{AAZ90} back in 1990. It was answered negatively by P. Malcolmson and F. Okoh in~\cite{MO09}. It is also known that the IDF property does not ascend to monoid-domain extensions (see \cite[Example~5.5]{fG22}). Also in~\cite{MO09}, P. Malcolmson and F. Okoh proved that the IDF property ascends to polynomial extensions if one restricts attention to the class of GCD-domains. This has been recently improved in two different directions. First, S. Eftekhari and M. Khorsandi~\cite{EK18} proved that the IDF property ascends to polynomial extensions over the class of MCD-finite domains. Second, and more recently, F. Gotti and M. Zafrullah~\cite{GZ23} proved that the IDF property ascends to polynomial extensions over the class of PSP-domains. The most natural generalization of the class consisting of all PSP-domains is the class consisting of all GL-domains, so it is natural to wonder whether the IDF property ascends to polynomial extensions when restricted to the class of GL-domains.

\begin{question} \label{quest:ascent of the IDF prop over GL-domains}
    For a GL-domain $D$, is it true that if $D$ has the IDF property, then the polynomial extension $D[x]$ also has the IDF property? 
\end{question}

In Section~\ref{sec:background}, which is the background section, we discuss some preliminaries, providing the notation, terminology, and standard results that we shall use later.
\smallskip

In Section~\ref{sec:some results on GL-domains}, which is the first section of content of this paper, we begin our investigation of the class of GL-domains. First, we characterize GL-domains in terms of certain elements whose behavior is somehow similar to that of a prime: an integral domain is a GL-domain if and only if every nonzero nonunit element is prime-like (a nonzero nonunit $p \in D$ is prime-like provided that for any $r,s \in D$ such that $p \mid_D rs$ there exists a nonunit divisor $d \in D$ of $p$ such that either $d \mid_D r$ or $d \mid_D s$). Then we prove that the notions of a GL-domain and a PSP-domain are equivalent when we restrict to the class of MCD-domains (an integral domain $D$ is MCD provided that any finite subset of $D$ consisting of nonzero elements has a maximal common divisor). We conclude Section~\ref{sec:some results on GL-domains} identifying a new class of GL-domains.
\smallskip

In Section~\ref{sec:ascent of the IDF-property over GL-domains}, which is the last section of this paper, we answer Question~\ref{quest:ascent of the IDF prop over GL-domains} negatively, providing an explicit construction of a GL-domain $D$ having the IDF property whose polynomial extension does not have the IDF property.

\bigskip
%%%%%%%%%%%%
%%%%%%%%%%%%
\section{Background}
\label{sec:background}

We let $\zz$, $\qq$, $\rr$, and $\cc$ be the set of integers, rational numbers, real numbers, and complex numbers, respectively. We let $\nn$ and $\nn_0$ be the set of positive and nonnegative integers, respectively. Also, we let $\pp$ denote the set of primes. For $p \in \pp$ and $n \in \nn$, we let $\ff_{p^n}$ be the finite field of cardinality $p^n$.
\smallskip

A commutative monoid is a commutative semigroup with an identity element. Let $M$ be a commutative monoid. The \emph{group of units} of $M$ is the abelian group $\uu(M)$ consisting of all invertible elements of $M$. 
\smallskip

Let $I$ be a nonempty set of indices, and let $\{M_i : i \in I \}$ be a collection of monoids. Then $P := \prod_{i \in I} M_i$ is also a monoid under the component-wise operation. We write the elements of the product monoid $P$ as $(b_i)_{i \in I}$. When $M_i = M$ for all $i \in I$, the monoid $P$ is denoted by $M^I$. For sets of indices $I$ and $J$ with $|I| = |J|$, one can check that $M^I$ and $M^J$ are isomorphic. Therefore when $I$ is countable, we often write $M^\nn$ instead of $M^I$.
\smallskip

For $b,c \in M$, we say that $c$ \emph{divides} $b$ if there exists $d \in M$ such that $b = cd$, in which case we write $c \mid_M b$. We say that $M$ is a \emph{valuation monoid} if for any $b,c \in M$ either $b \mid_M c$ or $c \mid_M b$. If for $b,c \in M$, both relations $b \mid_M c$ and $c \mid_M b$ hold, then we say that $b$ and $c$ are \emph{associates} and write $b \sim c$. Let $S$ be a nonempty subset of $M$. We say that $d \in M$ is a \emph{greatest common divisor} (GCD) of $S$ if $d$ is a common divisor of $S$ and for any other common divisor $d' \in M$ of $S$ the relation $d' \mid_M d$ holds. One can easily check that any two GCDs of $S$ are associates. We let $\gcd_M(S)$ or, simply $\gcd(S)$, denote the set consisting of all the GCDs of $S$ in $M$. If every nonempty finite subset of $M$ has a GCD, then $M$ is a \emph{GCD-monoid}. Observe that an integral domain $D$ is a GCD-domain if and only if its multiplicative monoid
\[
    D^* := \{d \in D : d \neq 0\}
\]
is a GCD-monoid. One can readily check that every valuation monoid is a GCD-monoid and also that the product of GCD-monoids is a GCD-monoid.
\smallskip

We say that $d$ is a \emph{maximal common divisor} (MCD) of $S$ if $d$ is a common divisor of $S$ and for any other common divisor $d' \in M$ of $S$ the divisibility relation $d \mid_M d'$ implies that $d$ and $d'$ are associates. Clearly, every GCD of $S$ is an MCD. We let $\mcd_M(S)$ denote the set consisting of all MCDs of $S$ in $M$. We say that $M$ is an \emph{MCD-monoid} provided that every nonempty finite subset of $M$ has an MCD. Clearly, every GCD-monoid is an MCD-monoid. Following~\cite{EK18}, we say that $M$ is \emph{MCD-finite} if every finite subset of $M$ has only finitely many non-associate maximal common divisors. %The following lemma will be useful later.
\smallskip

For each $n \in \nn$, an element $b \in M$ is called $n$-\emph{divisible} if there exists $c \in M$ such that $nc = b$. The monoid $M$ is said to be $n$-\emph{divisible} for some $n \in \nn$ if every element of $M$ is $n$-divisible. Observe that if $M$ is $n$-divisible, then the product monoid $M^I$ is also $n$-divisible for every nonempty set of indices~$I$.
\smallskip

Let $D$ be an integral domain. Given a nonzero polynomial $f(x) \in D[x]$, the \emph{content} of $f(x)$ is the ideal generated by the coefficients of $f(x)$, and the polynomial $f(x)$ is called \emph{primitive} if its content is the whole integral domain $D$. As formally stated in the introduction, $D$ is called a GL-domain if the product of two primitive polynomials is again a primitive polynomial. As an immediate consequence of Gauss's lemma, one obtains that every GCD-domain is a GL-domain.

\bigskip
%%%%%%%%%%%%%%%%%%%%%%%%
%%%%%%%%%%%%%%%%%%%%%%%%
\section{Identifying Classes of GL-Domains}
\label{sec:some results on GL-domains}

In this section we provide a new characterization of GL-domains, characterize GL-domains inside of the class of MCD-domains, and identify a new class of GL-domains.

\medskip
%%%%%%%%%%%%%%%%%%%%%%%%%%%%%%%%%%%%%%%%%%%%
\subsection{A Characterization of GL-domains}

The following characterization of a GL-domain is due to D. Anderson and R. Quintero~\cite{AQ97}.

\begin{prop} \cite[Theorem 3.1]{AQ97}
	For an integral domain $D$, the following conditions are equivalent.
	\begin{enumerate}
		\item[(a)] The integral domain $D$ is a GL-domain.
		\smallskip
		
		\item[(b)] For all $r,s,t \in D$, if $\gcd(r,s) = \gcd(r,t) = 1$ then $\gcd(r,st) = 1$.
	\end{enumerate} 
\end{prop}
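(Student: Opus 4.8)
The plan is to establish the two implications separately: $(a)\Rightarrow(b)$ is a one-line consequence of the GL-property applied to a suitable pair of linear polynomials, while $(b)\Rightarrow(a)$ is the substantive direction, which I would handle by an iterated ``lowest-coefficient'' argument of Gauss type.

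For $(a)\Rightarrow(b)$, I would take $r,s,t\in D$ with $\gcd(r,s)=\gcd(r,t)=1$, i.e.\ the only common divisors of $r$ and $s$ (and of $r$ and $t$) are units, so that the linear polynomials $r+sX$ and $r+tX$ are primitive. Since $D$ is a GL-domain, their product $(r+sX)(r+tX)=r^{2}+r(s+t)X+st\,X^{2}$ is primitive, and any common divisor $u$ of $r$ and $st$ divides each of $r^{2}$, $r(s+t)$, $st$, hence is a unit; therefore $\gcd(r,st)=1$.

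For $(b)\Rightarrow(a)$, I would let $f=\sum_{i=0}^{m}a_iX^i$ and $g=\sum_{j=0}^{n}b_jX^j$ be primitive, write $fg=\sum_k c_kX^k$, and assume for contradiction that $fg$ is not primitive, so some nonunit $d\in D$ divides every $c_k$. For any nonunit $e$ dividing every $c_k$, primitivity of $f$ and $g$ forces $e$ to miss some coefficient of each, so I can set $\alpha(e):=\min\{i:e\nmid_D a_i\}\le m$ and $\beta(e):=\min\{j:e\nmid_D b_j\}\le n$. In the coefficient $c_{\alpha(e)+\beta(e)}=\sum_{i+j=\alpha(e)+\beta(e)}a_ib_j$ every summand other than $a_{\alpha(e)}b_{\beta(e)}$ is divisible by $e$, whence $e\mid_D a_{\alpha(e)}b_{\beta(e)}$; since $e$ is a nonunit, the contrapositive of $(b)$ applied with $r=e$ produces a nonunit $e'\mid_D e$ with $e'\mid_D a_{\alpha(e)}$ or $e'\mid_D b_{\beta(e)}$. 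In the first case $e'$ divides $a_i$ for all $i\le\alpha(e)$ and $b_j$ for all $j<\beta(e)$, so $\alpha(e')\ge\alpha(e)+1$ and $\beta(e')\ge\beta(e)$, and the second case is symmetric; either way $\alpha(e')+\beta(e')>\alpha(e)+\beta(e)$. Iterating from $e_0=d$ then yields nonunits $d=e_0,e_1,e_2,\dots$ with $\alpha(e_k)+\beta(e_k)$ strictly increasing, contradicting $\alpha(e_k)+\beta(e_k)\le m+n$; hence $fg$ is primitive.

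The step I expect to require the most care is precisely the one where $e\mid_D a_{\alpha(e)}b_{\beta(e)}$ is converted into information about one of the two factors: over a GCD-domain one would finish immediately via Euclid's lemma, but a general nonunit of a GL-domain need not be prime, and since $D$ need not be atomic one cannot ``cancel'' a common factor of $d$ and induct on factorization length. Condition $(b)$ saves the day by delivering a nonunit divisor of $e$ on one side or the other, with $\alpha+\beta$ acting as a strictly increasing, bounded progress measure that forces the process to terminate after at most $m+n+1$ steps. I would also want to verify the lowest-coefficient cancellation in the degenerate cases $\alpha(e)=0$, $\beta(e)=0$, or $\alpha(e)=m$, $\beta(e)=n$, but those are routine.
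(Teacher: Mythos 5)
Your proof is correct. Note that the paper itself offers no argument for this proposition --- it is quoted from Anderson--Quintero \cite[Theorem~3.1]{AQ97} --- so there is no in-paper proof to compare against; what you have written is a complete, self-contained justification. The direction $(a)\Rightarrow(b)$ via the product $(r+sX)(r+tX)$ is exactly the standard reduction, and your $(b)\Rightarrow(a)$ is the classical Gauss-lemma argument (isolate the term $a_{\alpha}b_{\beta}$ in $c_{\alpha+\beta}$, where $\alpha,\beta$ are the minimal indices of coefficients not divisible by the putative common divisor) upgraded to the non-atomic setting: since a nonunit $e$ of a general domain need not be prime and one cannot induct on factorization length, you instead use $(b)$ to extract a nonunit divisor $e'$ of $e$ that strictly increases the bounded quantity $\alpha+\beta$, which terminates the descent. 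All the small points check out: $\alpha(e')$ and $\beta(e')$ remain well defined because $e'$ is a nonunit dividing $e$ and $f,g$ are primitive; $a_{\alpha(e)}\neq 0$ automatically since $e$ divides $0$; and a nonunit witness of $\gcd(e,a_\alpha b_\beta)\neq 1$ is supplied by $e$ itself. This is, in substance, the same mechanism the paper's Proposition~3.3 uses in the reverse direction (prime-like elements), so your argument fits naturally alongside the paper's own characterization.
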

\smallskip

In their paper~\cite{AQ97}, the authors provided further characterizations of GL-domains. Here we add one more characterization in terms of certain elements that somehow behave like primes, and so we call them prime-like.

\begin{defn}
	Let $D$ be an integral domain. A nonzero nonunit $p \in D$ is called \emph{prime-like} if the following property holds: for all $r,s \in D$ the fact that $p \mid_D rs$ ensures the existence of a divisor $d \in D \setminus D^\times$ of $p$ such that $d \mid_D r$ or $d \mid_D s$.
\end{defn}

We can now use the notion of prime-like elements to characterize GL-domains.

\begin{prop} \label{prop:prime-like characterization of GL-monoids}
	Let $D$ be an integral domain. Then the following conditions are equivalent.
	\begin{enumerate}
		\item[(a)] $D$ is a GL-domain.
		\smallskip
		
		\item[(b)] Each nonzero nonunit element of $D$ is prime-like.
	\end{enumerate}
\end{prop}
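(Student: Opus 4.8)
The plan is to derive both implications from the Anderson--Quintero criterion recalled just above: $D$ is a GL-domain if and only if, for all $r,s,t\in D$, the equalities $\gcd(r,s)=\gcd(r,t)=1$ force $\gcd(r,st)=1$. Throughout I read ``$\gcd(a,b)=1$'' as the assertion that $1$ is a GCD of $\{a,b\}$, equivalently that every common divisor of $a$ and $b$ lies in $D^\times$ (no genuine GCD of other pairs need exist for this to make sense). The key translation is: for a divisor $p$ of something, ``no nonunit divisor of $p$ divides $r$'' says exactly that every common divisor of $p$ and $r$ is a unit, i.e.\ $\gcd(p,r)=1$.

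For (a)$\Rightarrow$(b), fix a nonzero nonunit $p\in D$ and suppose $p\mid_D rs$. Assume toward a contradiction that no nonunit divisor of $p$ divides $r$ or divides $s$. By the translation above this means $\gcd(p,r)=\gcd(p,s)=1$, so applying the Anderson--Quintero criterion to the triple $(p,r,s)$ yields $\gcd(p,rs)=1$. But $p$ is a nonunit and $p\mid_D rs$, so $p$ is a nonunit common divisor of $p$ and $rs$, contradicting $\gcd(p,rs)=1$. Hence some nonunit divisor $d$ of $p$ satisfies $d\mid_D r$ or $d\mid_D s$, i.e.\ $p$ is prime-like; since $p$ was arbitrary, (b) holds.

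For (b)$\Rightarrow$(a), I would verify the Anderson--Quintero criterion directly. Let $r,s,t\in D$ with $\gcd(r,s)=\gcd(r,t)=1$ and suppose, for a contradiction, that $\gcd(r,st)\neq 1$; then there is a nonunit $p\in D$ with $p\mid_D r$ and $p\mid_D st$. First dispose of the degenerate cases: if $r=0$ then $\gcd(r,s)=1$ forces $s\in D^\times$ (take the common divisor $s$), and likewise $t\in D^\times$, so $st\in D^\times$ and $\gcd(r,st)=1$ trivially; if $s=0$ or $t=0$ then $\gcd(r,\cdot)=1$ forces $r\in D^\times$ and again $\gcd(r,st)=1$ trivially. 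So we may assume $r,s,t$ are nonzero, whence $p$ is a nonzero nonunit. By hypothesis $p$ is prime-like, and $p\mid_D st$, so there is a nonunit divisor $d$ of $p$ with $d\mid_D s$ or $d\mid_D t$. Since $d\mid_D p\mid_D r$ we also have $d\mid_D r$, so $d$ is a nonunit common divisor of $r$ and $s$, or of $r$ and $t$, contradicting $\gcd(r,s)=1$ or $\gcd(r,t)=1$. Therefore $\gcd(r,st)=1$, and $D$ is a GL-domain.

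Both directions are short, so I do not expect a genuine obstacle here; the only points needing care are the interpretation of the ``$\gcd=1$'' shorthand in terms of common divisors and the explicit handling of the zero cases in (b)$\Rightarrow$(a), which otherwise could be silently skipped.
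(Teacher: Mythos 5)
Your proposal is correct and follows essentially the same route as the paper: both directions are reduced to the Anderson--Quintero criterion, with (a)$\Rightarrow$(b) obtained by noting that $\gcd(p,r)=\gcd(p,s)=1$ would force $\gcd(p,rs)=1$ against $p\mid_D rs$, and (b)$\Rightarrow$(a) by applying the prime-like property to a nonunit common divisor of $r$ and $st$. Your explicit treatment of the zero cases is a minor tidiness improvement over the paper's argument, but the underlying mechanism is identical.
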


\begin{proof}
	(a) $\Rightarrow$ (b): First, assume that $D$ has the GL-property. Fix an element $p \in D \setminus D^\times$, and let us check that $p$ is prime-like. To do so, suppose that $p \mid_D rs$ for some $r,s \in D \setminus D^\times$. Since $p \mid_D rs$ and $p \notin D^\times$, it follows that $\gcd(p,rs) \neq 1$. This, along with the fact that $D$ satisfies the GL-property, ensures that either $\gcd(p,r) \neq 1$ or $\gcd(p,s) \neq 1$. Finally, observe that if $\gcd(p,r) \neq 1$ (resp., $\gcd(p,s) \neq 1$), then there exists a nonunit divisor of $p$ that also divides $r$ (resp., $s$). Hence $p$ is prime-like.
	\smallskip
	
	(b) $\Rightarrow$ (a): Assume now that every nonzero nonunit of $D$ is prime-like. Suppose, toward a contradiction, that $D$ does not have the GL-property. Thus, there exist $p,r,s \in D$ such that $\gcd(p,r) = \gcd(p,s) = 1$ but $\gcd(p,rs) \neq 1$. As $\gcd(p,rs) \neq 1$, we can take a nonunit $d \in D$ such that $d \mid_D p$ and $d \mid_D rs$. Because every nonzero nonunit of $D$ is prime-like, there is a nonunit $d' \in D$ dividing $d$ in $D$ such that either $d' \mid_D r$ or $d' \mid_D s$. After assuming, without loss of generality, that $d' \mid_D r$, we see that $d' \mid p$, whence $\gcd(p,r) \neq 1$, which is a contradiction.
\end{proof}

\medskip
%%%%%%%%%%%%%%%%%%%%%%%%%%%%%%
\subsection{Connection with Maximal Common Divisors}

As the class of PSP-domains, it is well known that the class consisting of all pre-Schreier domains is contained in the class of GL-domains. In our next theorem, we prove that when restricted to the class of integral domains where any two elements have a maximal common divisor, the notions of being pre-Schreier and that of having the GL-property are equivalent.
\smallskip

Let $D$ be an integral domain. Following Cohn~\cite{pC68}, we say that a nonzero nonunit $p \in D$ is \emph{primal} if whenever $p \mid_D rs$ for some $r,s \in D$, we can write $p = r' s'$ for some $r', s' \in D$ such that $r' \mid_D r$ and $s' \mid_D s$.

\begin{defn}
    An integral domain is called \emph{pre-Schreier} if every nonzero nonunit of $D$ is primal.
\end{defn}

The notion of a pre-Schreier domain was coined and studied by Zafrullah in~\cite{mZ87} (although pre-Schreier domains were previously considered in various papers, including~\cite{pC68,DS78,MR78}). It turns out that the property of being pre-Schreier is stronger than the GL-property (indeed, in \cite[Proof of Lemma~2.1]{mZ90} it is proved that every pre-Schreier domain has the PSP-property, which is known to be stronger than the $D$-property). We proceed to argue that, in the class of MCD-domains, these three properties are equivalent.

\begin{theorem}
	Let $D$ be an MCD-domain. Then the following conditions are equivalent.
	\begin{enumerate}
		\item[(a)] $D$ is a pre-Schreier domain.
		\smallskip
		
		\item[(b)] $D$ is a PSP-domain.
		\smallskip
		
		\item[(c)] $D$ is a GL-domain.
	\end{enumerate}
\end{theorem}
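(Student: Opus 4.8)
The implications (a)~$\Rightarrow$~(b) and (b)~$\Rightarrow$~(c) hold over an arbitrary integral domain and were already recalled in the discussion preceding the statement: every pre-Schreier domain is a PSP-domain by \cite[Proof of Lemma~2.1]{mZ90}, and every PSP-domain is a GL-domain via Tang's result that the product of a super-primitive polynomial by a primitive polynomial is primitive \cite{hT72}. So the plan is to prove only the new implication, (c)~$\Rightarrow$~(a): assuming $D$ is an MCD-domain with the GL-property, I would show that every nonzero nonunit $p \in D$ is primal.

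The heart of the matter is a cancellation lemma that I would isolate first: \emph{if $a,b,c \in D^*$ satisfy $\gcd(a,b) = 1$ and $a \mid_D bc$, then $a \mid_D c$.} To prove it, use that $D$ is an MCD-domain to pick a maximal common divisor $e$ of $a$ and $c$, and write $a = e a_1$ and $c = e c_1$. A short maximality argument shows $\gcd(a_1,c_1) = 1$: if $t$ is a common divisor of $a_1$ and $c_1$, then $et$ is a common divisor of $a$ and $c$ with $e \mid_D et$, so maximality of $e$ forces $et \sim e$, i.e. $t \in D^\times$. Since $a_1 \mid_D a$, we likewise get $\gcd(a_1,b) = 1$; and cancelling the nonzero element $e$ from $a \mid_D bc$ gives $a_1 \mid_D b c_1$. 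Now apply the Anderson--Quintero characterization of GL-domains quoted above: from $\gcd(a_1,b) = \gcd(a_1,c_1) = 1$ we conclude $\gcd(a_1, bc_1) = 1$, and since $a_1$ itself divides $bc_1$ this forces $a_1 \in D^\times$. Hence $a \sim e$, and as $e \mid_D c$ we obtain $a \mid_D c$.

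With the lemma available, the rest is bookkeeping. Fix a nonzero nonunit $p$ and suppose $p \mid_D rs$ with $r,s \in D$; if $r = 0$ or $s = 0$ the trivial factorization $p = p \cdot 1$ witnesses primality, so assume $r,s \in D^*$. Choose a maximal common divisor $d$ of $p$ and $r$, and write $p = d p_1$ and $r = d r_1$; the same maximality argument as before gives $\gcd(p_1,r_1) = 1$. Cancelling the nonzero element $d$ from $p \mid_D rs$ yields $p_1 \mid_D r_1 s$, and the cancellation lemma (with $a = p_1$, $b = r_1$, $c = s$) gives $p_1 \mid_D s$. Therefore $p = d p_1$ with $d \mid_D r$ and $p_1 \mid_D s$, so $p$ is primal; since $p$ was an arbitrary nonzero nonunit, $D$ is pre-Schreier.

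I expect the only real obstacle to be the cancellation lemma, and within it the decisive point is recognizing that the GL-property, in the $\gcd$-form of Anderson and Quintero, is precisely what upgrades the MCD-reduced configuration $\gcd(a_1,b) = \gcd(a_1,c_1) = 1$ to $\gcd(a_1,bc_1) = 1$; the MCD hypothesis enters only to manufacture the coprime reductions $a = e a_1$, $c = e c_1$ (and, in the main argument, $p = d p_1$, $r = d r_1$). It is worth one sentence to confirm that throughout, ``$\gcd(x,y) = 1$'' is read as ``every common divisor of $x$ and $y$ is a unit'' (equivalently, $1$ is a GCD of $x$ and $y$), so that the cited proposition and all the steps above apply verbatim even though general GCDs need not exist in $D$. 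One could also phrase the cancellation lemma using Proposition~\ref{prop:prime-like characterization of GL-monoids} instead, but the $\gcd$-form keeps the coprimality manipulations transparent.
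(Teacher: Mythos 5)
Your proof is correct and follows essentially the same route as the paper's: extract maximal common divisors to produce coprime cofactors and then invoke the GL-property in the Anderson--Quintero $\gcd$ form to force a residual cofactor to be a unit. The only difference is organizational: you package the second MCD extraction as a standalone Euclid-type cancellation lemma, which lets you dispense with the paper's preliminary observation that one of $\gcd(p,a)$, $\gcd(p,b)$ must be nontrivial and with its case split on whether the first cofactor is a unit.
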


\begin{proof}
	(a) $\Rightarrow$ (b) $\Rightarrow$ (c): This is known.
	\smallskip
	
	(c) $\Rightarrow$ (a): Assume that $D$ is a GL-domain. Fix an element $p \in D \setminus D^{\times}$ such that $p \mid_D ab$ for some $a, b \in D$. We argue that $p$ is primal. First, observe that either $\gcd(p,a) \neq 1$ or $\gcd(p,b) \neq 1$. Otherwise, the fact that $D$ is a GL-domain implies that $\gcd(p,ab)=1$, which is a contradiction.
    
    Since $D$ is an MCD-domain, we can assume, without loss of generality, that there is some nonunit $d \in D$ such that $d \in$ $\mcd_D(p, a)$. Now, pick $p' \in D$ and $a' \in D$ such that $p=d \cdot p'$ and $a = d \cdot a'$. If $p' \in D^{\times}$, it follows that $p \mid_D a$, which implies that $p$ is primal, and then we are done. 
    
    Let us instead assume that $p'$ is not a unit of $D$. The relation $p' \mid_D a'b$ follows naturally from $p \mid_D ab$. Also, the fact that $d \in \mcd_D(p, a)$ implies that $\gcd(p', a') = 1$. Notice that, as a consequence, $\gcd(p', b)\neq 1$, since otherwise $\gcd(p',a'b)=1$, contradicting that $p' \notin D^{\times}$. We can argue as before and ensure the existence of an element $d' \notin D^{\times}$ such that $d' \in \mcd_D(p', b)$. Now, pick $r\in D$ and $b' \in D$ such that $p'= d' \cdot r$ and $b= d' \cdot b'$. Notice that $\gcd(r, b')=1$. We also know that $\gcd(r, a')=1$ because $\gcd(p', a')=1$. The fact that $D$ is a GL-domain implies that $\gcd(r, a'b')=1$, and since $r \mid_D a'b'$, we can conclude that $r \in D^{\times}$. Then $p' \mid_D b$, which implies that $p$ is primal because we can write $p=d \cdot p'$ with $d \mid_D a$ and $p' \mid_D b$. 

    Hence we can now conclude that $D$ is a pre-Schreier domain.

\end{proof}

\medskip
%%%%%%%%%%%%%%%%%%%%%%%%%%%%%%%%%%
\subsection{A Class of GL-Domains}

We proceed to identify a class consisting of GL-domains that are not pre-Schreier.

\begin{prop} \label{prop:GL not pre-Schreier domains}
	Let $D$ be an integral domain that is not a field, and let $F$ be the field of fractions of $D$. For an extension field $K$ of $F$ the following statements hold.
	\begin{enumerate}
		\item If $D$ is a UFD, then $D + x K[x]$ is a GL-domain.
		\smallskip
		
		\item If $K$ is a proper extension of $F$, then $D + x K[x]$ is not a pre-Schreier domain.
	\end{enumerate} 
\end{prop}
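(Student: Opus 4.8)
The plan is to treat the two statements separately. For (1) I would verify the GL-property through the prime-like characterization of Proposition~\ref{prop:prime-like characterization of GL-monoids}: it suffices to prove that every nonzero nonunit of $R := D + xK[x]$ is prime-like. First I would record a handful of elementary facts about $R$, all obtained by comparing constant terms (and using that the nonconstant coefficients of an element of $R$ may be arbitrary elements of $K$): the unit group of $R$ is $D^\times$; a degree-zero element of $R$ lies in $D$; for $c\in D\setminus\{0\}$ and $f\in R$ one has $c\mid_R f$ if and only if $c\mid_D f(0)$; every prime element of the UFD $D$ is again a prime element of $R$; and, if $g\in K[x]$ is irreducible with $g(0)=1$, then $g\in R$ and $g$ is a prime element of $R$ (push the divisibility into the PID $K[x]$, extract the factor there, and read off from its constant term that the cofactor lies back in $R$). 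The key reduction is then the obvious remark that a nonzero nonunit of $R$ possessing a prime divisor in $R$ is automatically prime-like.

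With that in place, fix a nonzero nonunit $p\in R$. If the constant term $p(0)$ is a unit of $D$, then $p$ is nonconstant, $p/p(0)\in R$ has constant term $1$, and factoring $p/p(0)$ over $K[x]$ while rescaling each irreducible factor to have constant term $1$ produces, via the last fact above, a prime divisor of $p$ in $R$. If instead $p(0)$ is not a unit of $D$ --- which includes the case $p(0)=0$ --- then, because $D$ is a UFD that is not a field, there is a prime $q$ of $D$ dividing $p(0)$ (any prime of $D$ when $p(0)=0$, a prime factor of $p(0)$ otherwise); then $q\mid_R p$ and $q$ is prime in $R$. In both cases $p$ has a prime divisor in $R$, hence is prime-like, and Proposition~\ref{prop:prime-like characterization of GL-monoids} shows $R$ is a GL-domain.

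For (2) I would choose $\gamma\in K\setminus F$ (so $\gamma\notin D$) and test the nonzero nonunit $x$. Taking $r=s=\gamma x$ gives $rs=\gamma^2 x^2$ and $x\mid_R rs$, since $\gamma^2 x\in R$. If $x$ were primal, one could write $x=uv$ in $R$ with $u\mid_R\gamma x$ and $v\mid_R\gamma x$; counting degrees forces one of $u,v$ to be a nonzero constant $c\in D$ and the other to be $(1/c)x$, and then $(1/c)x\mid_R\gamma x$ forces $\gamma c\in D$, so $\gamma=(\gamma c)/c\in F$ --- a contradiction. Hence $x$ is not primal and $R$ is not pre-Schreier.

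I expect the place where the argument of part (1) could go wrong, and where the hypothesis that $D$ is not a field is genuinely needed, is the subcase $p(0)=0$: elements of $R$ divisible by $x$ typically do not factor into primes of $R$, and a prime divisor is produced only because $D$ supplies one. Indeed, when $D$ is a field and $K\supsetneq F$, the element $x$ itself is not prime-like, so $R$ fails to be a GL-domain; the construction is interesting precisely in the range where $D$ is a non-field UFD and $K$ properly contains $F$. Everything else --- the constant-term lemmas and the degree bookkeeping in part (2) --- is routine.
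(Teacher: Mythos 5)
Your proof is correct and follows essentially the same route as the paper: part (1) via the prime-like characterization with the same case split on the constant term (using a prime of $D$ when $p(0)$ is a nonunit or zero, and an irreducible of $K[x]$ normalized to constant term $1$ otherwise), and part (2) by showing $x$ is not primal via the same degree-counting argument. The only difference is cosmetic: you package the verification through the clean observation that any element with a prime divisor is automatically prime-like, whereas the paper checks the prime-like condition directly in each case (and in the $\mathrm{ord} \ge 1$ case uses an arbitrary nonunit of $D$ rather than a prime).
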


\begin{proof}
	Set $R := D + x K[x]$.
	\smallskip
	
	(1) To prove that $R$ is a GL-domain, fix a nonunit $r(x) \in R^*$ and let us show that $r(x)$ is prime-like. To do so, suppose that $r(x) \mid_R b(x)c(x)$ for some $b(x),c(x) \in R^*$, and let us argue that there is a nonunit divisor of $r(x)$ in $R$ that divides either $b(x)$ or $c(x)$ in  $R^*$. If $b(x) \in R^\times$ (resp., $c(x) \in R^\times$), then $r(x) \mid_R c(x)$ (resp., $r(x) \mid_R b(x)$). Therefore we can assume that neither $b(x)$ nor $c(x)$ is a unit. We divide the rest of our proof into cases.
	\smallskip
	
	\textsc{Case 1:} $\text{ord} \, r(x) \ge 1$. Observe that, in this case, each $d \in D^* \setminus D^\times$ is a nonunit divisor of $r(x)$ in $R$. In addition,  $\text{ord} \, r(x) \ge 1$ ensures that either $\text{ord} \, b(x) \ge 1$ or $\text{ord} \, c(x) \ge 1$, whence any  $d \in D^* \setminus D^\times$  is a nonunit divisor of $r(x)$ in $R$ such that either $d \mid_R b(x)$ or $d \mid_R c(x)$. Hence $r(x)$ is prime-like.
	\smallskip
	
	\textsc{Case 2:} $\text{ord} \, r(x) = 0$. We split the rest of our argument into two subcases, and conclude in each of them that $r(x)$ must be a prime-like element in $R$. This will imply that $R$ is a GL-domain, as desired.
	\smallskip
	
	\textsc{Case 2.1:} $r(0) \notin D^\times$. In this case, $r(0)$ is divisible in $D$ by an element $p \in D$ that is prime (this is because $D$ is a UFD and so $r(0)$ must factor in $D$ into finitely many primes). From the fact that $p \mid_D r(0)$, one immediately obtains that $p \mid_R r(x)$. Observe that the divisibility relation $r(x) \mid_R b(x)c(x)$ implies that $r(0) \mid_D b(0)c(0)$ and, therefore, $p \mid_D b(0)c(0)$. Since $p$ is prime in $D$, we obtain that $p \mid_D b(0)$ or $p \mid_D c(0)$. Thus, $p$ is a nonunit divisor of $r(x)$ in $R$ such that either $p \mid_R b(x)$ or $p \mid_R c(x)$. Hence we have shown that $r(x)$ is prime-like if $r(0) \notin D^\times$. 
	\smallskip
	
	\textsc{Case 2.2:} $r(0) \in D^\times$. Since $D^\times = R^\times$, after replacing $r(x)$ by $r(x)/r(0)$, we can further assume that $r(0) = 1$. As $r(x)$ is not a unit, $\deg r(x) \ge 1$ and so $r(x)$ is not a unit in $K[x]$. Let $p'(x) \in K[x]$ be a prime in $K[x]$ dividing $r(x)$, which exists because $K[x]$ is a UFD and $r(x)$ is not a unit. After replacing $p'(x)$ by $p'(x)/p'(0)$, one can further assume that $p'(0) = 1$ (because $K[x]^\times = K^\times$). As $p'(x) \mid_{K[x]} r(x)$, it follows that $p'(x) \mid_{K[x]} b(x)c(x)$. Now the fact that $p'(x)$ is prime in $K[x]$ ensures that either $p'(x) \mid_{K[x]} b(x)$ or $p'(x) \mid_{K[x]} c(x)$. Let us assume first that $p'(x) \mid_{K[x]} b(x)$. Then $p'(x)q(x) = b(x)$ for some $q(x) \in K[x]$. Because $p'(0) = 1$, we see that $q(0) = p'(0) q(0) = b(0) \in D$. Hence $q(x) \in R$, which implies that $p'(x) \mid_R b(x)$. Similarly, we obtain that $p'(x) \mid_R c(x)$ if we assume that $p'(x) \mid_{K[x]} c(x)$. Hence we have found a nonunit divisor $p'(x)$ of $r(x)$ dividing either $b(x)$ or $c(x)$, from which we conclude that $r(x)$ is also a prime-like element of $R$ when $r(0) \in D^\times$.
	\smallskip
	
	(2) It suffices to argue that $x$ is not a primal element in $R$. Since $K$ is a proper extension field of $F$, we can fix $\alpha \in K \setminus F$. It is clear that $x \mid_R (\alpha x) (\alpha^{-1}x)$. Notice that the only way $x$ can be expressed as the product of two elements of $R$ is as follows: $x = d(d^{-1}x)$ for some $d \in D^*$. However, if $d^{-1}x \mid_R \alpha x$ or $d^{-1}x \mid_R \alpha^{-1} x$, then we could take $e \in D^*$ such that $\alpha \in \big\{ \frac{e}d, de \big\} \subset F$, which contradicts that $\alpha \notin F$. Hence $x$ is not primal, which implies that $R$ is not a pre-Schreier domain.
\end{proof}

We conclude with the following concrete example, which was the initial motivation behind Proposition~\ref{prop:GL not pre-Schreier domains}.

\begin{example}
    Consider the subring $R := \zz + x\rr[x]$ of the polynomial ring $\rr[x]$. Since $\zz$ is a UFD whose field of fractions is strictly contained inside the field $\rr$, part~(1) of Proposition~\ref{prop:GL not pre-Schreier domains} implies that $R$ is a GL-domain while part~(2) of the same proposition implies that $R$ is not a pre-Schreier domain.
\end{example}

\bigskip
%%%%%%%%%%%%%%%%%%%%%%%%%%%%%%%
\section{The Ascent of the IDF-Property in the Class of GL-Domains}
\label{sec:ascent of the IDF-property over GL-domains}

The main goal of this section is to investigate whether the IDF property ascends from a GL-domain to its polynomial extension. Toward that objective, we construct a GL-domain that is antimatter and satisfies the IDF property. 

We begin our construction by exhibiting a GCD-domain with some desired conditions. Let $\{X, Y, Z, T_1, T_2, T_3, \ldots\}$ be an infinite set of algebraically independent indeterminates over $\mathbb{F}_2$. Let us consider the integral domain
\begin{equation}\label{R_0}
    R_0 := \ff_2 \bigg[ \big\{ X^\alpha, Y^\alpha, Z^\alpha : \alpha \in \qq_{\ge 0} \big\} \cup \Big \{\prod_{i=1}^{\infty} T_i^{\alpha_i} : \alpha_i \in \mathbb{Q}_{\ge 0} \ \forall \, i \in \mathbb{N} \Big\} \bigg].
\end{equation}
Set $P := \prod_{i=1}^{\infty}\qq_{\ge 0}$, and observe that the integral domain $R_0$ in \eqref{R_0} is isomorphic to the monoid algebra of the monoid $P$ over the field $\ff_2$. As $\qq_{\ge 0}$ is a valuation monoid, it is a GCD-monoid, and so $P$ is also a GCD-monoid. This, along with the fact that $\ff_2$ is a field, ensures that $R_0$ is a GCD-domain.

For the set of monomials
\[
  J := \bigg\{ \prod_{i=1}^{\infty} T_i^{\alpha_i} : \alpha_i \in \mathbb{Q}_0^+ \ \forall \, i \in \mathbb{N} \bigg\},
\]  
we consider the following subring of $R_0$:
\begin{equation}\label{main domain}
    R :=\mathbb{F}_2\big[\big\{X^\alpha, Y^\alpha, Z^\alpha : \alpha \in \qq_{\ge 0} \big\} \cup 
     \big\{X^{\alpha}T, Y^\alpha T, Z^\alpha T : (\alpha,T) \in \qq_{>0} \times J \big\} \big].
     %\\ &\{Y^{\alpha}T : \alpha \in \mathbb{Q}^+, T \in J\}, \{Z^{\alpha}T : \alpha \in \mathbb{Q}^+, T \in J\}]
\end{equation}

We now proceed to prove that the domain $R$ is not MCD-finite.

\begin{prop} \label{prop:R is a GL-domain}
    The integral domain $R$ is a GL-domain that is not MCD-finite.
\end{prop}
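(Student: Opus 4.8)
The plan is to put $R$ into the form of a pullback of two GCD-domains. I would first observe that $R$ is the monoid algebra $R=\ff_2[S]$, where, viewing $J$ additively, $S$ is the submonoid
\[
S \;=\; \bigl\{(a,b,c,t)\in \qq_{\ge 0}^3\times J \;:\; t=0\ \text{or}\ (a,b,c)\neq(0,0,0)\bigr\}
\]
of $M:=\qq_{\ge 0}^3\times J$, and $R_0=\ff_2[M]$. The key structural remark is that $R=\varphi^{-1}(\ff_2)$, where $\varphi\colon R_0\twoheadrightarrow \ff_2[J]$ is the $\ff_2$-algebra homomorphism killing $X^\alpha,Y^\alpha,Z^\alpha$ for every $\alpha>0$ and fixing the $T_i$. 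Thus $\ker\varphi$ is an ideal of $R_0$ with $\ker\varphi\subseteq R$, both $R_0$ and $\ff_2[J]$ are GCD-domains with trivial unit group, and $R_0$ is perfect (it has characteristic $2$ and $M$ is $2$-divisible). Since a product of two polynomials over $\ff_2[M]$ is a monomial only when both factors are, divisibility among monomials of $R$ is just divisibility in $S$: for $\mu,\nu\in S$ one has $\mu\mid_R\nu$ iff $\nu-\mu\in S$.

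To see that $R$ is not MCD-finite I would take the two-element set $\{XYT_1,\,XZT_1\}\subseteq R$. A common divisor in $R$ divides each of these monomials, hence is a monomial $X^aT_1^s$ with $0\le a\le 1$, $0\le s\le 1$ and ($a>0$ or $s=0$); moreover $X^aT_1^s\mid_R X^{a'}T_1^{s'}$ holds precisely when either $a<a'$ and $s\le s'$, or the two monomials coincide (because the quotient $X^{a'-a}T_1^{s'-s}$ lies in $S$ iff $a'>a$ or $s'=s$). Hence the maximal common divisors are exactly the elements $XT_1^s$ with $s\in\qq\cap[0,1]$; these are pairwise non-associate, so $\{XYT_1,XZT_1\}$ has infinitely many non-associate maximal common divisors, and $R$ is not MCD-finite.

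For the GL-property I would use Proposition~\ref{prop:prime-like characterization of GL-monoids} and show that every nonzero nonunit $f\in R$ is prime-like. Fix $g,h\in R^{*}$ with $f\mid_R gh$ (if $g$ or $h$ is a unit this is immediate) and distinguish the two possible values of $\varphi(f)\in\ff_2$. If $\varphi(f)=1$, then any $R_0$-divisor $d_0$ of $f$ satisfies $\varphi(d_0)\mid 1$ in $\ff_2[J]$, so $\varphi(d_0)=1$, and therefore $\varphi(p/d_0)=\varphi(p)$ for every $p\in R$ with $d_0\mid_{R_0}p$; that is, $d_0$ divides in $R$ whatever it divides in $R_0$. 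Since $R_0$ is a GCD-domain, $f$ is primal in $R_0$: writing $f=d_gf_1$ with $d_g=\gcd_{R_0}(f,g)\mid_{R_0}g$ and $f_1\mid_{R_0}h$, at least one of $d_g,f_1$ is a nonunit and, by the remark just made, it divides $f$ in $R$ and divides $g$ (resp.\ $h$) in $R$. If instead $\varphi(f)=0$, i.e.\ $f\in\ker\varphi$, then again using that $R_0$ is a GCD-domain (hence pre-Schreier) write $f=ab$ in $R_0$ with $a\mid_{R_0}g$ and $b\mid_{R_0}h$; since $\varphi(a)\varphi(b)=\varphi(f)=0$ in the domain $\ff_2[J]$, a short case check produces a nonunit $\delta\in\ker\varphi$ dividing $f$ in $R_0$ and dividing $g$ or $h$ in $R_0$. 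Now take a square root $e=\sqrt{\delta}\in R_0$ (possible since $R_0$ is perfect); as $\varphi(e)^2=0$ and $\ff_2[J]$ is a domain we get $e\in\ker\varphi\subseteq R$ with $e$ a nonunit, and since $\ker\varphi$ is an ideal of $R_0$ the quotients $f/e=e\,(f/\delta)$ and $g/e=e\,(g/\delta)$ (resp.\ $h/e$) lie in $\ker\varphi\subseteq R$; hence $e$ is a nonunit divisor of $f$ in $R$ dividing $g$ or $h$ in $R$. In either case $f$ is prime-like, so $R$ is a GL-domain.

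I expect the $\varphi(f)=0$ case to be the real obstacle: the common divisor handed to us by the GCD-structure of $R_0$ need not belong to $R$, because its cofactor with $f$ or with $g$ may be a nontrivial pure-$T$ monomial, and the fix is precisely to replace it by a square root lying in the ideal $\ker\varphi$ — which is where characteristic $2$ together with $2$-divisibility of the exponent monoid enters. The remaining verifications (that $R=\ff_2[S]=\varphi^{-1}(\ff_2)$, and the divisibility computations in $S$) are routine.
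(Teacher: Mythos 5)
Your proof is correct, and it reaches the conclusion by a genuinely different route in both halves. For the GL-property, the paper invokes the Arnold--Sheldon criterion that a domain is GL if and only if products of primitive (content) ideals are primitive, and then runs a GCD computation on the generators of two such ideals inside $R_0$; you instead argue element-wise via Proposition~\ref{prop:prime-like characterization of GL-monoids}, repackaging the paper's ``Claim'' (that $fg\in R$ forces $f\in R$ or $g\in R$) as the pullback description $R=\varphi^{-1}(\ff_2)$ along $\varphi\colon R_0\twoheadrightarrow\ff_2[J]$ and splitting on $\varphi(f)\in\{0,1\}$. The two arguments run on the same engine --- the pre-Schreier property of the GCD-domain $R_0$, the pure-$T$-part dichotomy, and the square-root trick (the paper's ``every element of $I$ has a square root in $I$'' is exactly your replacement of $\delta$ by $e=\sqrt{\delta}\in\ker\varphi$) --- but your version avoids content ideals entirely, is self-contained given the prime-like characterization proved earlier in the paper, and makes transparent exactly where characteristic $2$ and the $2$-divisibility of the exponent monoid are used, namely only in the $\varphi(f)=0$ case where the cofactor of the $R_0$-divisor may escape $R$. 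For non-MCD-finiteness (which the paper states in the proposition but only proves later, inside the final theorem), your witness $\{XYT_1,\,XZT_1\}$ with maximal common divisors $XT_1^{s}$, $s\in\qq\cap[0,1]$, is smaller and cleaner than the paper's $\{XY\prod_i T_i,\, XZ\prod_i T_i\}$ with maximal common divisors $X\prod_{j\neq i}T_j$; both are valid, and your divisibility criterion for monomials of $S$ checks out.
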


\begin{proof}
    Let us first prove that the integral domain $R$ is a GL-domain. In order to do so, it is convenient to establish the following claim.
    \smallskip

    \noindent \textsc{Claim.} If $f,g \in R_0$ such that $fg \in R$, then either $f \in R$ or $g \in R$.
    \smallskip

    \noindent \textsc{Proof of Claim.} First observe that the set
    \[
        I := \langle \{X^{\alpha} : \alpha \in \mathbb{Q}^+\}, \{Y^{\alpha} :  \alpha \in \mathbb{Q}^+\}, \{Z^{\alpha} : \alpha \in \mathbb{Q}^+\} \rangle.
    \]
    is an ideal of $R$. Moreover, every element $f \in R_0$ can be written as $f = f_1 + f_2$, where $f_1 \in I$ and $f_2 \in \ff_2[ \, {T : T \in J} \, ]$. Let $f,g \in R_0$ such that $fg \in R$, and write $f=f_1+f_2$ and $g=g_1+g_2$, where $f_1, g_1 \in I$ and $f_2, g_2 \in  \ff_2[ \, {T : T \in J} \, ]$. Note that $$fg=(f_1+f_2)(g_1+g_2) = f_1g_1+f_1g_2+f_2g_1+f_2g_2.$$ Since $I$ is an ideal, we see that $f_1g_1 + f_1g_2 + f_2g_1 \in I \subset R$ and then we obtain that $f_2g_2=fg-(f_1g_1+f_1g_2+f_2g_1) \in R$. However, this leaves us with only two options: 
	\begin{itemize}
	    \item $f_2g_2=0_R$, which means that either $f_2=0_R$ or $g_2=0_R$, and this implies that either $f \in R$ or $g\in R$.
        \smallskip

        \item $f_2g_2=1_R$, which means that $f_2=g_2=1_R$ because $R$ is reduced, and then, we obtain that both $f\in R$ and $g \in R$.
	\end{itemize}
    Hence the claim is established.
    \medskip

    We are in a position to prove that $R$ is a GL-domain. Our proof is very similar to the one provided in \cite[Proposition~2.8]{AS75}. As an integral domain $D$ satisfies the GL-property if and only if the product of two primitive ideals of $D$ is also primitive (see \cite[Proposition~1.2]{AS75}), we only need to show that the product of two primitive ideals of $R$ is primitive. Let $A$ and $B$ be primitive ideals of $R$, and let us prove that $AB$ is a primitive ideal. Write %take two finitely generated ideals of $R$:
    \[
        A := \langle a_1, a_2, \ldots , a_m \rangle \qquad \text{and} \qquad B := \langle b_1, b_2 \ldots , b_n \rangle,
    \]
    for some $a_1, \ldots, a_m \in R$ and $ b_1,\ldots,b_n \in R$. Suppose, towards a contradiction, that the ideal $AB$ is not primitive. Then there exists a nonunit $c \in R$ such that $AB \subseteq cR$. This means that there is a nonunit $c$ of $R_0$ such that $(AR_0)(BR_0) \subseteq cR \subseteq cR_0 \neq R_0$. Since $R_0$ is a GCD-domain, it is also a GL-domain, and this implies that $AR_0$ and $BR_0$ cannot both be primitive in $R_0$, so at least one of them is contained in a proper ideal of $R_0$. Set 
    \[
        \lambda := \gcd (a_1, \ldots, a_m) \qquad \text{and} \qquad \mu := \gcd(b_1, \ldots, b_n).
    \]
    Then $AR_0 \subseteq \lambda R_0$ and $ BR_0 \subseteq \mu R_0$. Moreover, at least one of $\lambda$ or $\mu$ is not a unit. Since $R_0$ is a GCD-domain, the GCD of any set of elements exists, and the following identity holds:
    \[
        \gcd(a_ib_1,\ldots, a_ib_n) = a_i \cdot \gcd(b_1,\ldots, b_n) = a_i\mu.
    \]
    Moreover, since $AB \subseteq cR_0$, we know that $c$ divides each $a_ib_j$. In particular, $c$ is a common divisor of $a_ib_1,\ldots,a_ib_n$, and thus $c \mid_R a_i\mu$ for every $i \in \ldb 1,m \rdb$. We can also note that
    \[
        \gcd(a_1\mu,\ldots,a_m\mu) = \mu \cdot \gcd(a_1,\ldots, a_m) = \mu \lambda.
    \]
    As shown above, $c$ is a common divisor of $a_1\mu, a_2\mu,\ldots, a_m\mu$ and, therefore, $c \mid_{R_0} \mu \lambda$. Since $R_0$ is a GCD-domain, it is also a pre-Schreier domain. Thus, there exist $\alpha, \beta \in R_0 $ such that $c=\alpha \beta$, with $\alpha \mid \lambda$ and $\beta \mid \mu$. Then it follows from the established claim that at least one of $\alpha$ or $ \beta$ belongs to $R$. Without loss of generality, let us assume that $\alpha \in R$. We split our analysis into the following two cases.
    \smallskip
    
    \textsc{Case 1:} $\alpha$ is a unit. Then $\beta = c$, and hence $\beta \in R$ is a nonunit. This implies that $BR_0 \subseteq \beta R_0$, a proper principal ideal of $R_0$, generated by an element of $R$.
    \smallskip
        
    \textsc{Case 2:} $\alpha$ is a nonunit. In this case, $AR_0 \subseteq \alpha R_0$, a proper principal ideal of $R_0$, generated by an element of $R$. 
    \smallskip

    We have shown that one of $AR_0$ or $BR_0$ must be contained in a proper ideal of $R_0$ generated by an element of $R$. Without loss of generality, we can assume that $AR_0 \subseteq \alpha R_0$ for some $\alpha \in R \setminus R^{\times}$.  
	
    We can then write each generator of $A$ as $a_i = \alpha j_i$ for each $i \in \ldb 1,m \rdb$, where $j_i \in R_0$. Let us write $\alpha = \alpha_1 + \alpha_2$ and $j_i = j_{i,1}+j_{i,2}$, where $\alpha_1, j_{i,1} \in I$ and $\alpha_2, j_{i,2} \in \mathbb{F}_2[ T : T\in J]$. Since $\alpha \in R$, we are done once we argue the following two possibilities for the values of $\alpha_2$.
    \begin{itemize}
        \item $\alpha_2 = 1$, and then $ j_{i,2} \cdot 1 \in R$, which implies that $j_i \in R$ and $a_i \in \alpha R$. Thus, $A \subseteq \alpha R$.
        \smallskip
    
        \item  $\alpha_2 = 0$. This implies that $\alpha \in I$. Since every element of $I$ has a square root in $I$, there exists $\delta \in I$ such that $\alpha = \delta^2$. Then $a_i= \delta^2j_i = \delta(\delta j_i)$, and therefore $\delta j_i \in \delta R_0 \subseteq IR_0 = I \subseteq R$. It follows that $a_i \in \delta R$ and $A \subseteq \delta R$.
    \end{itemize}
\end{proof}

%We are now in a position to prove that the domain $R$ is a GL-domain. 

% \begin{lem} \label{the domain R is GL}
% 	The domain $R$ is a GL-domain.
% \end{lem}

% \begin{proof}
    
% \end{proof}

We conclude this paper with our main result.
	
\begin{theorem}
    The IDF property does not ascend to polynomial extensions in the class of GL-domains.
\end{theorem}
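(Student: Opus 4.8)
The plan is to exhibit a single GL-domain $D$ that has the IDF property but whose polynomial extension $D[x]$ fails it, and the natural candidate is the domain $R$ constructed in \eqref{main domain}, for which Proposition~\ref{prop:R is a GL-domain} already establishes the GL-property. So the two remaining tasks are: (i) show that $R$ is an IDF-domain (in fact I would aim to show it is \emph{antimatter}, i.e.\ has no irreducible elements at all, which trivially forces the IDF property since every nonzero element is divisible by zero irreducibles up to associates); and (ii) exhibit an explicit nonzero polynomial in $R[x]$ that has infinitely many non-associate irreducible divisors.

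For (i): every generator of $R$ is either of the form $X^\alpha, Y^\alpha, Z^\alpha$ with $\alpha \in \qq_{\ge 0}$, or of the form $X^\alpha T, Y^\alpha T, Z^\alpha T$ with $\alpha \in \qq_{>0}$ and $T \in J$. Since $\qq_{\ge 0}$ and $\qq_{>0}$ are divisible and $J$ consists of monomials with exponents in $\qq^+_0$ (so $J$ itself is ``divisible'' as a monoid), I would check that every nonzero nonunit of $R$ admits a nontrivial factorization: a monomial $X^\alpha T$ with $\alpha>0$ factors as $X^{\alpha/2}(X^{\alpha/2}T)$, a monomial $X^\alpha$ with $\alpha>0$ factors as $X^{\alpha/2}X^{\alpha/2}$, and a general element of $\ff_2[\{T:T\in J\}]$ has no units other than $1$ but any single monomial in it is again infinitely divisible. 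The one subtlety is genuine \emph{polynomial} elements (sums of several monomials): here I would argue that any such element, after pulling out the monomial gcd of its terms, still lies in a subring isomorphic to $\ff_2$ adjoin a divisible monoid, hence is a nonunit that is a nontrivial product; alternatively one observes that $R$ contains $\ff_2[\{T^\alpha: \alpha\in\qq_{>0}, T\text{ a monomial}\}]$-like structure making every element $2$-divisible in $\ff_2$-characteristic. Concretely, since $\ch R = 2$ and $R$ contains square roots of all the generators (because all exponent monoids are $2$-divisible), every monomial has a square root in $R$, and this is enough to conclude $R$ is antimatter. I expect this step to require care in spelling out why an arbitrary polynomial element is never irreducible, but morally it reduces to the $2$-divisibility already recorded in Section~\ref{sec:background}.

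For (ii): I would take the polynomial $f(x) := x^2 + X \in R[x]$, or more suggestively $f(x) := x - X \in R[x]$ shifted so its constant term is a highly divisible element, and look at its divisors coming from the $T_i$. The point of building $R$ as a subring of the GCD-domain $R_0$ with the extra ``$T$-twisted'' generators $X^\alpha T$ is precisely that elements like $X^{1/n} T_n$ become irreducible \emph{inside $R$} (they are divisible in $R_0$ but the divisors escape $R$), and infinitely many non-associate such elements can be made to divide a single element of $R[x]$. So the key step is: identify a nonzero $g \in R[x]$ and, for each $n$, an irreducible $\pi_n \in R[x]$ (built from $X^{1/n}$ and $T_n$) with $\pi_n \mid_{R[x]} g$ and $\pi_n \not\sim \pi_m$ for $n \neq m$; then $R[x]$ is not IDF while $R$ is an IDF (indeed antimatter) GL-domain, which answers Question~\ref{quest:ascent of the IDF prop over GL-domains} in the negative and proves the theorem.

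The main obstacle I anticipate is step (ii): one must verify both that the chosen $\pi_n$ are genuinely irreducible in $R[x]$ — which requires understanding units and factorizations in $R[x]$, not just in $R$ — and that they are pairwise non-associate and simultaneously divide one fixed polynomial. Getting a clean single witness polynomial whose factorizations in $R_0[x]$ descend appropriately to $R[x]$ is the crux; the GL-property and antimatter-ness of $R$ are comparatively routine given Proposition~\ref{prop:R is a GL-domain} and the $2$-divisibility of the exponent monoids.
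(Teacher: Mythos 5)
Your overall skeleton matches the paper's: use the domain $R$ from \eqref{main domain}, get the GL-property from Proposition~\ref{prop:R is a GL-domain}, and get the IDF property for free because $R$ is antimatter (your $2$-divisibility argument for step (i) is the right idea and is essentially what the paper relies on). The problem is step (ii), which you correctly flag as the crux but do not actually carry out, and whose proposed ingredients are internally inconsistent with step (i): the elements $X^{1/n}T_n$ you want to use as irreducibles of $R$ cannot be irreducible, precisely because your own antimatter argument shows $X^{1/n}T_n = X^{1/2n}\cdot\big(X^{1/2n}T_n\big)$ is a product of two nonunits of $R$. Since $R$ has no irreducibles at all, any infinite family of non-associate irreducible divisors in $R[x]$ must consist of polynomials of positive degree, and neither $x^2+X$ nor $x-X$ comes with such a family; so as written the proposal does not prove the theorem.

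The paper closes this gap by a different and cleaner route: it shows $R$ is not MCD-finite and then invokes the Eftekhari--Khorsandi theorem \cite[Theorem~2.1]{EK18}, which says that if $R[x]$ is an IDF-domain then $R$ must be MCD-finite. Concretely, with $s_y = XY\prod_{i\in\nn}T_i$ and $s_z = XZ\prod_{i\in\nn}T_i$, each $b_i = X\,T_1\cdots T_{i-1}T_{i+1}\cdots$ is a common divisor of $\{s_y,s_z\}$ with cofactors $YT_i$ and $ZT_i$, and these cofactors have no nonunit common divisor inside $R$ (any such divisor would be a pure $T$-monomial, which $R$ excludes); hence the $b_i$ form infinitely many non-associate MCDs. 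If you insist on a direct construction in the spirit of your plan, the right witnesses are the degree-one polynomials $(YT_i)x + ZT_i$, each of which divides $s_y x + s_z = b_i\big((YT_i)x + ZT_i\big)$ and is irreducible in $R[x]$ because a nontrivial factorization would force a nonunit of $R$ dividing both $YT_i$ and $ZT_i$; this is exactly the mechanism the MCD-finiteness criterion packages for you. Without either that explicit family or the appeal to \cite[Theorem~2.1]{EK18}, the proof is incomplete.
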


\begin{proof}
    Let $R$ be the integral domain introduced in \eqref{main domain}, which is a GL-domain in light of Proposition~\ref{prop:R is a GL-domain}.

    Let us argue now that $R$ is not MCD-finite. Set $s_y = XY\prod_{i \in \nn} T_i$ and $s_z = XZ \prod_{i \in \nn} T_i$. Also, define a subset $B \subset R$ by
    \[
        B := \{b_i \in R : i \in \mathbb{N},\ b_i = XT_1T_2 \cdots T_{i-1}T_{i+1}T_{i+2} \cdots\}.
    \]
    As $(s_y,s_z) = b_i(YT_i, ZT_i)$ for every $i \in \nn$, the set $B$ is an infinite subset of $R$ consisting of common divisors of the set $\{s_y, s_z\}$.
    % Indeed,
    % \[
    %     \bigg(\frac{s_y}{b_i}, \frac{s_z}{b_i}\bigg)=(YT_i, ZT_i).
    % \]
    Since $R_0 \cong \ff_2[P]$ and the group of monomials in the monoid algebra $\ff_2[P]$ is a multiplicative closed set, for each $i \in \nn$, any common divisor of $YT_i$ and $ZT_i$ must be a monomial in $R_0$. However, only monomials that do not involve $X, Y, Z$ can divide $YT_i$ and $ZT_i$ simultaneously. As $R$ contains no monomials of this form, we conclude that each $b_i$ is an MCD of the set $\{s_y,s_z\}$ in $R$. Thus, $R$ is not MCD-finite.
    
    Since $R$ is an antimatter domain, it is trivially an IDF-domain. It follows from \cite[Theorem~2.1]{EK18} that the polynomial extension $R[x]$ cannot be an IDF-domain because $R$ is not MCD-finite. Thus, we have constructed a GL-domain having the IDF property such that $R[x]$ does not have the IDF property, and so we conclude that the IDF property does not ascend to polynomial extensions in the class of GL-domains.
\end{proof}

\bigskip
%%%%%%%%%%%%%%%%%%%%%%%%%%
\section*{Acknowledgments}

This paper is the result of a collaboration carried out while the authors were part of CrowdMath 2024, a year-long free online program in mathematical research generously hosted by the MIT Mathematics Department and the Art of Problem Solving. The authors are grateful to their CrowdMath research mentor, Felix Gotti, as well as the advisors and organizers of CrowdMath for making this research opportunity possible.

\bigskip
%%%%%%%%%%%%%%%%%%%%%%%%%%%%%%%%%%%%%%%%
\section*{Conflict of Interest Statement}

On behalf of all authors, the corresponding author states that there is no conflict of interest related to this paper.

\bigskip
%%%%%%%%%%%%%%%%%%%%%%%%%%%
%%%%%%%%%%%%%%%%%%%%%%%%%%%


\begin{thebibliography}{20}
	
    \bibitem{dA00} D. D. Anderson, \emph{GCD domains, Gauss’ Lemma and contents of polynomials, in Non-Noetherian commutative ring theory} (Eds. S. T. Chapman and S. Glaz), 1--31, Math. Appl., \textbf{520}, Kluwer Acad. Publ., Dordrecht, 2000.

	\bibitem{AAZ90} D. D. Anderson, D. F. Anderson, and M. Zafrullah, \emph{Factorizations in integral domains}, J. Pure Appl. Algebra \textbf{69} (1990) 1--19.

    \bibitem{AQ97} D.~D. Anderson, R.~O. Quintero, \emph{Some generalizations of GCD-domains}. In: Factorization in Integral Domains (Ed. D. D. Anderson) pp. 189--195, Lectures in Pure and Applied Mathematics, Marcel Dekker, New York 1997.

    \bibitem{AZ07} D. D. Anderson and M. Zafrullah, \emph{The schreier property and Gauss' Lemma}, Bollettino dell' Unione Matematica Italiana, Series 8, \textbf{10-B} (2027) 43--62.

    \bibitem{AS75} J. T. Arnold and P. B. Sheldon, \emph{Integral domains that satisfy Gauss's lemma}, Michigan Math. J. \textbf{22} (1975) 39--51.

    \bibitem{BZ51} R.~A. Beaumont and H.~S. Zuckerman, \emph{A characterization of the subgroups of the additive rationals}, Pacific J. Math. \textbf{1} (1951) 169--177.
     
	\bibitem{pC68} P. M. Cohn, \emph{Bezout rings and their subrings}, Proc. Cambridge Philos. Soc. \textbf{64} (1968) 251--264.

	\bibitem{DS78} W. Dicks and E.~D. Sontag, \emph{Sylvester domains}, J. Pure Appl. Algebra \textbf{13} (1978) 243--275.
    
	\bibitem{EK18} S. Eftekhari and M. R. Khorsandi, \emph{MCD-finite domains and ascent of IDF-property in polynomial extensions}, Comm. Algebra \textbf{46} (2018) 3865–3872.

    \bibitem{lF70} L. Fuchs, \emph{Infinite Abelian Groups I}, Academic Press, 1970.

    \bibitem{GGT21} A. Geroldinger, F. Gotti, and S. Tringali, \emph{On strongly primary monoids, with a focus on Puiseux monoids}, J. Algebra \textbf{567} (2021) 310--345.
     
  	\bibitem{fG23} F. Gotti, \emph{Hereditary atomicity and ACCP in abelian groups}. Submitted. Preprint on arXiv: https://arxiv.org/abs/2303.01039.

  	\bibitem{fG22} F. Gotti, \emph{On semigroup algebras with rational exponents}, Comm. Algebra \textbf{50} (2022) 3--18.
    
    \bibitem{GG20} F. Gotti and M. Gotti, \emph{On the molecules of numerical semigroups, Puiseux monoids, and Puiseux algebras}. In: Numerical Semigroups (Eds. V. Barucci, S. T. Chapman, M. D'Anna, and R. Fr\"oberg) pp. 141--161, Springer INdAM Series, vol. \textbf{40}, Springer Nature, Switzerland, 2020.
 	
 	\bibitem{GZ23} F. Gotti and M. Zafrullah, \emph{Integral domains and the IDF property}, J. Algebra \textbf{614} (2023) 564--591.
 	
 	\bibitem{GW75} A. Grams and H. Warner, \emph{Irreducible divisors in domains of finite character}, Duke Math. J. \textbf{42} (1975) 271--284.
 	
 	\bibitem{MO06} P. Malcolmson and F. Okoh, \emph{A class of integral domains between factorial domains and IDF-domains}, Houston J. Math. \textbf{32} (2006) 399--421.
 	
 	\bibitem{MO09} P. Malcolmson and F. Okoh, \emph{Polynomial extensions of idf-domains and of idpf-domains}, Proc. Amer. Math. Soc. \textbf{137} (2009) 431--437.
 	
    \bibitem{MR78} S. McAdam and D.~E. Rush, \emph{Schreier rings}, Bull. London Math. Soc. \textbf{10} (1978) 77--80.

    \bibitem{hT72} H. T. Tang, \emph{Gauss' lemma}, Proc. Amer. Math. Soc. \textbf{35} (1972) 372--376.

 	\bibitem{mZ87} M. Zafrullah, \emph{On a property of pre-Schreier domains}, Comm. Algebra \textbf{15} (1987) 1895–1920.

    \bibitem{mZ90} M. Zafrullah, \emph{Well behaved prime $t$-ideals}, J. Pure Appl. Algebra \textbf{65} (1990) 199--207.
  
\end{thebibliography}
\end{document}